\newtheorem{thm}{Theorem}[section] 
\newtheorem*{thm*}{Theorem}
\newtheorem{cor}[thm]{Corollary} 
\newtheorem{lem}[thm]{Lemma}
\theoremstyle{definition}
\newtheorem{exa}[thm]{Example}
\numberwithin{equation}{section}
\newcommand{\skal}[2]{\langle #1,#2\rangle}
\newcommand{\alg}[1]{\mathfrak{#1}}
\begin{document}

\title{Rodin's formula in arbitrary codimension}
\author{Ma\l gorzata Ciska-Niedzia\l omska}
\author{Kamil Niedzia\l omski}
\date{\today}

\subjclass[2010]{26B15; 28A75; 26B10}
\keywords{$p$--modulus of family of surfaces, extremal function, Rodin's formula}
 
\address{
Department of Mathematics and Computer Science \endgraf
University of \L\'{o}d\'{z} \endgraf
ul. Banacha 22, 90-238 \L\'{o}d\'{z} \endgraf
Poland
}
\email{mciska@math.uni.lodz.pl}
\email{kamiln@math.uni.lodz.pl}

\begin{abstract}
We extend the Rodin's formula for $p$--modulus of the family of curves in $\mathbb{R}^n$ to arbitrary codimension. The proof relies on the formula for the $p$--modulus of family of level sets of a submersion and an algebraic lemma relating Jacobi matrices of considered maps. We state appropriate examples.
\end{abstract}

\maketitle

\section{Introduction}

The aim of this note is to extend to any codimension the Rodin's formula for $2$--modulus of family of curves \cite{Rodin} and its generalization to any $p>1$ obtained recently by Brakalova, Markina and Vasil'ev \cite{BMV}. We rely on the formula for the $p$--modulus and the extremal function for the family of surfaces given by the level sets of a submersion \cite{KP}. The key observation is the relation between appropriate jacobians. This is purely algebraic fact.  We conclude by stating appropriate examples.

Let us introduce the necessary notions and give an outline of the approach for codimension $n-1$ case. 

\subsection{Fuglede $p$--modulus}
The $p$--modulus, introduced by Fuglede in \cite{BF}, is a powerful tool in geometric measure theory, especially in the context of quasiconformal maps, weak upper gradients, harmonicity on metric measure spaces, etc. 

Consider a family $\Sigma$ of measures on a measure space $(X,\alg{m})$ such that each $\mu$--measurable function for all $\mu\in \Sigma$ is $\alg{m}$--measurable. Then, we say that such non--negative $f$ is {\it admissible} if 
\begin{equation}\label{eq:admissible}
\int_X f\,d\mu\geq 1\quad\textrm{for all $\mu\in\Sigma$}.
\end{equation}
We write $f\in{\rm adm}(\Sigma)$. By the $p$--modulus of $\Sigma$ (with respect to $\alg{m}$), where $p>1$ is fixed, we mean a number
\begin{equation}\label{eq:pmodulus}
{\rm mod}_p(\Sigma)=\inf_{f\in{\rm adm}(\Sigma)}\int_X f^p\,d\alg{m}.
\end{equation}
If ${\rm adm}(\Sigma)$ is empty we put ${\rm mod}_p(\Sigma)=\infty$. It can be shown that the assignment $\Sigma\to {\rm mod}_p(\Sigma)$ is an outer measure on the space of measures whose $\sigma$--algebras are contained in $\sigma$--algebra of $\alg{m}$. Moreover, we say that an admissible function $f_{\Sigma}$ is {\it extremal} if ${\rm mod}_p(\Sigma)=\int_X f_{\Sigma}^p\,d\alg{m}$. It is an easy consequence of properties of $p$--modulus \cite{BF}, that up to a subfamily of $p$--modulus zero, there is unique extremal function.

In our opinion, the key features of this notion are the following:					
\begin{itemize}
\item for a family of $k$--dimensional surfaces, or more precisely, the Hausdorff measures associated to them, th $p$--modulus is conformal invariant provided $n=kp$,
\item if $n$--modulus of family $\Sigma$ of curves is non--zero, then it is nonzero under quasi--conformal deformation of $\mathbb{R}^n$. More precisely, if $f:\Omega\to \Omega'$ is a $K$--quasiconformal map, then
\begin{equation*}
\frac{1}{K}{\rm mod}_n(f(\Sigma))\leq{\rm mod}_n(\Sigma)\leq K{\rm mod}_n(f(\Sigma)), 
\end{equation*} 
\item if $f$ is $p$--integrable with respect to $\alg{m}$, then, up to a subfamily of zero $p$--modulus, $f$ is integrable with respect to any measure in $\Sigma$,
\item if $(f_n)$ is a sequence, which converges in $L^p(X,\alg{m})$ to $f_0$, then, up to a subsequence and a family of $p$--modulus equal zero, $(f_n)$ converges to $f_0$ in $L^1(X,\mu)$ for $\mu\in\Sigma$.
\end{itemize}

\subsection{Alternative approach to Rodin's formula} 
The second fact stated in the list above found many applications in the study of geometry of quasi--conformal maps (for example, see \cite{A}). In fact it can be considered as a definition of quasi--conformality. Thus it desirable to be able to calculate $n$--modulus of families of curves and, in particular, $2$--modulus for plane curves. Such formula was obtained by Rodin \cite{Rodin}.

\begin{thm}[Rodin \cite{Rodin}]\label{thm:Rodin}
Let $f:[0,1]\times[0,b]\to Q\subset\mathbb{R}^2$ be a smooth orientation preserving homeomorphism such that its jacobian $J_f$ is strictly positive. Let $\Gamma_0$ be a family of vertical lines in $[0,1]\times[0,b]$ and denote by $\Gamma$ the family of curves being the images of these lines with respect to $f$. Then the extremal function $f_{\Gamma}$ for the $2$--modulus of $\Gamma$ equals
\begin{equation}\label{eq:Rodinextremal}
f_{\Gamma}(z)=\frac{1}{l(x)}\left(\frac{|\dot{c}_x|}{J_f}\right)\circ f^{-1}(z),\quad f(x,t)=z,
\end{equation}
where $c_x(t)=f(x,t)$, dot denotes the derivative with respect to $t$ and
\begin{equation}\label{eq:Rodinhat}
l(x)=\int_0^b \frac{|\dot{c}_x|^2}{J_f}\,dt.
\end{equation}
Moreover, the $2$--modulus of $\Gamma$ equals
\begin{equation*}
{\rm mod}_2(\Gamma)=\int_0^1 \frac{1}{l(x)}\,dx.
\end{equation*}
\end{thm}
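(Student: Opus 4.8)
The plan is to establish the two assertions simultaneously, following the classical scheme for the $p$--modulus of a curve family: first check that the proposed $f_\Gamma$ is admissible and compute $\int_Q f_\Gamma^2$, then show that every admissible function has at least this energy, so that $f_\Gamma$ is extremal and the infimum in \eqref{eq:pmodulus} is attained.

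The workhorse is the change of variables $z=f(x,t)$. Since $f$ is an orientation--preserving smooth homeomorphism onto $Q$ with $J_f>0$, one has $dz=J_f(x,t)\,dx\,dt$; moreover $J_f>0$ forces $|\dot c_x(t)|=|\partial_t f(x,t)|>0$, so $l(x)$ is a finite, positive, continuous function of $x$ and $f_\Gamma$ is a well--defined continuous function on $Q$. The curve $c_x$ carries the arclength measure $|\dot c_x(t)|\,dt$, whence
\begin{equation*}
\int_{c_x} f_\Gamma\,ds=\int_0^b f_\Gamma(f(x,t))\,|\dot c_x(t)|\,dt=\frac{1}{l(x)}\int_0^b\frac{|\dot c_x(t)|^2}{J_f(x,t)}\,dt=1
\end{equation*}
by the definition \eqref{eq:Rodinhat} of $l$, so $f_\Gamma\in{\rm adm}(\Gamma)$, in fact with equality on every curve of $\Gamma$. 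Plugging $f_\Gamma$ into the energy and using the change of variables,
\begin{equation*}
\int_Q f_\Gamma(z)^2\,dz=\int_0^1\!\!\int_0^b\frac{1}{l(x)^2}\,\frac{|\dot c_x(t)|^2}{J_f(x,t)}\,dt\,dx=\int_0^1\frac{dx}{l(x)},
\end{equation*}
which already gives ${\rm mod}_2(\Gamma)\le\int_0^1 l(x)^{-1}\,dx$.

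For the reverse inequality take any admissible $g\ge 0$. Writing $g\,|\dot c_x|=\bigl(g\sqrt{J_f}\bigr)\bigl(|\dot c_x|/\sqrt{J_f}\bigr)$ and applying the Cauchy--Schwarz inequality in $t$ for each fixed $x$, admissibility on $c_x$ yields
\begin{equation*}
1\le\Bigl(\int_0^b g(f(x,t))\,|\dot c_x(t)|\,dt\Bigr)^2\le\Bigl(\int_0^b g(f(x,t))^2 J_f(x,t)\,dt\Bigr)\,l(x),
\end{equation*}
so $\int_0^b g(f(x,t))^2 J_f(x,t)\,dt\ge l(x)^{-1}$; integrating in $x$ and changing variables back,
\begin{equation*}
\int_Q g(z)^2\,dz=\int_0^1\!\!\int_0^b g(f(x,t))^2 J_f(x,t)\,dt\,dx\ge\int_0^1\frac{dx}{l(x)}.
\end{equation*}
Combining the two bounds gives ${\rm mod}_2(\Gamma)=\int_0^1 l(x)^{-1}\,dx$ and shows $f_\Gamma$ is extremal; uniqueness up to a subfamily of $2$--modulus zero is then automatic from the general properties of the $p$--modulus recalled above.

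I expect no serious obstacle here: the only points needing care are the legitimacy of Fubini's theorem and of the change of variables (both immediate, since the integrands are non--negative and $f$ is a smooth homeomorphism with $J_f$ bounded away from $0$ on the compact rectangle), and identifying the equality case of Cauchy--Schwarz — this is what forces $g(f(x,t))$ to be proportional to $|\dot c_x(t)|/J_f(x,t)$, the constant being pinned to $1/l(x)$ by the admissibility equality of the second paragraph, thereby recovering exactly $f_\Gamma$.
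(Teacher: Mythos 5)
Your argument is correct and complete: admissibility of $f_\Gamma$ with equality on every curve, the energy computation giving the upper bound, and the fiberwise Cauchy--Schwarz estimate giving the matching lower bound for an arbitrary admissible $g$ all check out, and the hypotheses ($f$ a smooth homeomorphism with $J_f>0$ on a compact rectangle) do justify the change of variables and Fubini steps. Analytically this is the same two-step scheme the paper uses to prove Theorem \ref{thm:levelsets} (H\"older on each fiber, then integrate over the parameter), specialized to $p=2$; the difference is one of coordinates. The paper first rewrites everything in terms of the submersion $F=\pi\circ f^{-1}$, uses the algebraic Lemma \ref{lem:algebraic1} to identify $|\dot c_x|/J_f$ with $|\nabla F|\circ f$, and then invokes the co-area formula, whereas you work directly in the $(x,t)$--parametrization, where $dz=J_f\,dx\,dt$ plays the role of the co-area formula and $|\dot c_x|\,dt$ that of $dH^1_{c_x}$, so the submersion and the algebraic lemma never appear. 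Your route is the more elementary and self-contained one for this planar statement; the paper's detour is what carries the argument to arbitrary codimension, where the pointwise identity between $|J^y_f|/|J_f|$ and the jacobian of the defining submersion (Lemma \ref{lem:main}) is the genuinely nontrivial ingredient and has no equally transparent direct analogue.
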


Let us rewrite formulae \eqref{eq:Rodinextremal} and \eqref{eq:Rodinhat} in a slightly different form, which will be more adequate for further considerations. Firstly, notice that
\begin{equation}\label{eq:Rodinhat2}
l(x)=\int_{c_x}\frac{|\dot{c_x}|}{J_f}\circ f^{-1}\,d H^1_{c_x},
\end{equation}
where $H^1_{c_x}$ is a volume element for $c_x$ (the $1$--dimensional Hausdorff measure on $c_x$). Denoting $l(x)$ by $\widehat{\left({\frac{|\dot{c_x}|}{J_f}}\right)}$, i.e., the integral over a curve $c_x$, formula \eqref{eq:Rodinextremal} takes the form
\begin{equation}\label{eq:Rodinextremal2}
f_{\Gamma}(z)=\frac{1}{\widehat{\left(\frac{|\dot{c}_x|}{J_f}\right)}}
\left(\frac{|\dot{c}_x|}{J_f}\right)\circ f^{-1}(z).
\end{equation}
From this representation, it is clear, than the integral of the extremal function $f_{\Gamma}$ on any curve from the family $\Gamma$ equals one (see also \cite{MCN} for more general approach). Let us now concentrate on the quantity $\frac{|\dot{c}_x|}{J_f}$. Put
\begin{equation*}
F(z)=x,\quad f(x,t)=z.
\end{equation*}
In other words, $F=\pi\circ f^{-1}$, where $\pi:[0,1]\times[0,b]\to [0,1]$ is a projection onto the first factor, $\pi(x,t)=x$. Then $F$ is a submersion with the fibers being the curves $c_x$ in $\Gamma$, $F(c_x(t))=x$. Differentiating this relation we have
\begin{equation*}
\skal{\nabla F}{D_xf}=1\quad\textrm{and}\quad\skal{\nabla F}{D_yf}=0,
\end{equation*}
where $D_xf$ and $D_yf$ denote differentials of $f$ with respect to $x$ and $y$, respectively, and $\skal{u}{v}=u_1v_1+u_2v_2$, $u=(u_1,u_2)$, $v=(v_1,v_2)$, is a standard inner product in $\mathbb{R}^2$. The key relation 
\begin{equation}\label{eq:mainalgebraic1}
\frac{|\dot{c_x}|}{J_f}=|\nabla F|\circ f,
\end{equation}
follows by the following simple algebraic fact applied to $A=Df$ and $v=(\nabla F)\circ f$.
\begin{lem}\label{lem:algebraic1}
Let $A\in GL(2,\mathbb{R})$ and let $v$ be any non--zero vector in $\mathbb{R}^2$. Denote the columns of $A$ by $u$ and $w$. Assuming $\skal{v}{u}=1$ and $\skal{v}{w}=0$ we have
\begin{equation*}
|w|=|\det A||v|.
\end{equation*} 
\end{lem}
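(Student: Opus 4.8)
The plan is to repackage the two scalar hypotheses as a single linear equation for $v$ and then invert it explicitly. Writing $e_1=(1,0)$, the conditions $\skal{v}{u}=1$ and $\skal{v}{w}=0$ say precisely that $A^{\mathrm{T}}v=e_1$, since the rows of $A^{\mathrm{T}}$ are exactly the columns $u,w$ of $A$. Because $A\in GL(2,\mathbb{R})$, this determines $v$ uniquely as $v=(A^{\mathrm{T}})^{-1}e_1$.

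Next I would invoke the classical closed form of the inverse of a $2\times 2$ matrix. With $u=(u_1,u_2)$, $w=(w_1,w_2)$ we have $\det A=u_1w_2-u_2w_1$ and $A^{-1}=\frac{1}{\det A}\begin{pmatrix} w_2 & -w_1\\ -u_2 & u_1\end{pmatrix}$, so $(A^{\mathrm{T}})^{-1}e_1$ is the first column of $(A^{-1})^{\mathrm{T}}$, i.e.\ the first row of $A^{-1}$, namely $v=\frac{1}{\det A}(w_2,-w_1)$. Taking euclidean norms and observing that $|(w_2,-w_1)|=|w|$ yields $|v|=|w|/|\det A|$, which rearranges to the claimed identity $|w|=|\det A|\,|v|$.

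An equivalent coordinate-light variant, which I would perhaps present instead: the condition $\skal{v}{w}=0$ forces $v=\lambda Rw$ for some $\lambda\in\mathbb{R}$, where $R$ denotes rotation by $\pi/2$ (this uses $w\neq 0$, which holds because $A$ is invertible); then $1=\skal{v}{u}=\lambda\skal{Rw}{u}=-\lambda\det A$, since $\skal{Rw}{u}$ is, up to sign, the signed area spanned by $u$ and $w$, so $\lambda=-1/\det A$ and $|v|=|\lambda|\,|w|=|w|/|\det A|$. I do not expect a genuine obstacle here: the computation is immediate, and the only points deserving a word of care are that invertibility of $A$ simultaneously guarantees $\det A\neq 0$ and $w\neq 0$ (so that the normalization is consistent and $v$ is well defined), and that one must pass to absolute values at the final step because $\det A$ and $\lambda$ may be negative.
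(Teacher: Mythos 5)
Your proposal is correct and follows essentially the same route as the paper: both reduce the two hypotheses to the statement that $v$ is the first row of $A^{-1}$, compute that row explicitly as $\frac{1}{\det A}(w_2,-w_1)$, and take norms. The rotation-by-$\pi/2$ variant you sketch is a pleasant coordinate-free rephrasing of the same computation, but it is not a genuinely different argument.
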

\begin{proof}
Conditions $\skal{v}{u}=1$ and $\skal{v}{w}=0$ define a system of two linear equations with the unknown vector $v$. It has a unique solution, which is a first row of the inverse matrix $A^{-1}$, i.e.
\begin{equation*}
v=\frac{1}{\det A}(w_2,-w_1),\quad\textrm{where $w=(w_1,w_2)$}.
\end{equation*} 
Hence $|v|=\frac{1}{|\det A|}|w|$, which is a required relation.
\end{proof}
Applying \eqref{eq:mainalgebraic1}, conditions \eqref{eq:Rodinhat2} and \eqref{eq:Rodinextremal2} are equivalent to 
\begin{equation*}
l(x)=\int_{c_x}|\nabla F|\,dH^1_{c_x}=\widehat{|\nabla F|}\quad\textrm{and}\quad
f_{\Gamma}(z)=\frac{1}{\widehat{|\nabla F|}}\nabla F\circ f^{-1}(z),
\end{equation*}
respectively, but this is just the formula for the extremal function for the $p$--modulus of family of curves given by a submersion \cite{KP} (see also the following section).

In the case of arbitrary $p>1$ and arbitrary dimension $n$ we have the result by Bralkova, Markina, Vasil'ev \cite{BMV}
\begin{thm}[Rodin's generalized formula \cite{BMV}]\label{thm:BMV}
Let $f$ belong to the Sobolev space $W^{1,p}(U,\mathbb{R}^n)$ with positive jacobian $J_f$ for almost all points in $U$, where $U$ is a connected neighborhood of a product $D\times[a,b]$ and $D$ is compact in $\mathbb{R}^{n-1}$. Let $\Gamma$ be a family of curves $\{c_x\}_{x\in D}$ in $\Omega=f(U)$, where $c_x(t)=f(x,t)$. Then the extremal function for the $p$--modulus of $\Gamma$ is given by the formula
\begin{equation*}
f_{\Gamma}(z)=\frac{1}{l(x)}\left(\frac{|\dot{c}_x|}{J_f}\right)^{q-1}\circ f^{-1}(z),\quad z=f(x,t)\in\Omega,
\end{equation*}
where
\begin{equation*}
l(x)=\int_a^b \left(\frac{|\dot{c}_x|}{J_f}\right)^q J_f\,dt.
\end{equation*} 
($q$ is conjugate to $p$, i.e., $\frac{1}{p}+\frac{1}{q}=1$). Moreover, the $p$--modulus of $\Gamma$ equals
\begin{equation*}
{\rm mod}_p(\Gamma)=\int_D l(x)^{1-p}\,dx.
\end{equation*}
\end{thm}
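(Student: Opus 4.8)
The plan is to carry out, in codimension $n-1$, the reduction already used above for the planar case: realize $\Gamma$ as the family of level sets of a submersion, invoke the formula for the $p$--modulus of such a family from \cite{KP}, and pass back to the given data by means of an $n$--dimensional analogue of Lemma~\ref{lem:algebraic1} together with a change of the parameter to the arclength. First I would set
\begin{equation*}
F=\pi\circ f^{-1}\colon f(D\times[a,b])\to D,\qquad \pi(x,t)=x,
\end{equation*}
the projection onto the first $n-1$ coordinates; since $f$ is invertible with $J_f>0$ a.e., $F$ is (almost everywhere) a submersion whose fibre through $z=f(x,t)$ is precisely $c_x$, so that $\Gamma$ is the family of level sets of $F$. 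Denoting by $J^{n-1}F=\sqrt{\det\bigl(DF\,(DF)^{T}\bigr)}$ the Jacobian of $F$ entering the coarea formula, the result of \cite{KP} gives
\begin{equation*}
f_{\Gamma}(z)=\frac{1}{l(x)}\bigl(J^{n-1}F\bigr)^{q-1}(z),\qquad l(x)=\int_{c_x}\bigl(J^{n-1}F\bigr)^{q-1}\,dH^1_{c_x},\qquad {\rm mod}_p(\Gamma)=\int_D l(x)^{1-p}\,dx.
\end{equation*}

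The heart of the matter is the identity $J^{n-1}F\circ f=|\dot c_x|/J_f$, the codimension $n-1$ version of \eqref{eq:mainalgebraic1}. Differentiating $F\circ f=\pi$ with respect to $x_i$ and to $t$ yields $\skal{\nabla F^j}{D_{x_i}f}=\delta_{ij}$ and $\skal{\nabla F^j}{\dot c_x}=0$, where $\nabla F^1,\dots,\nabla F^{n-1}$ are the rows of $DF$. The identity then follows from the following generalization of Lemma~\ref{lem:algebraic1}, applied to $A=Df$, with columns $u_i=D_{x_i}f$ and $w=\dot c_x$, and to $v_j=\nabla F^j\circ f$: \emph{if $A\in GL(n,\mathbb{R})$ has columns $u_1,\dots,u_{n-1},w$ and $v_1,\dots,v_{n-1}\in\mathbb{R}^n$ satisfy $\skal{v_j}{u_i}=\delta_{ij}$ and $\skal{v_j}{w}=0$, then $\sqrt{\det\bigl(\skal{v_j}{v_l}\bigr)_{j,l}}=|w|/|\det A|$.} To prove it I would note that the hypotheses force $v_1,\dots,v_{n-1}$ to be the first $n-1$ rows of $A^{-1}$, hence a basis of the hyperplane $w^{\perp}$ whose dual basis in $w^{\perp}$ consists of the orthogonal projections $\bar u_i$ of the $u_i$ onto $w^{\perp}$; therefore $\mathrm{vol}_{n-1}(\bar u_1,\dots,\bar u_{n-1})=\det\bigl(\skal{v_j}{v_l}\bigr)^{-1/2}$, and comparing with $|\det A|=\mathrm{vol}_n(u_1,\dots,u_{n-1},w)=|w|\cdot\mathrm{vol}_{n-1}(\bar u_1,\dots,\bar u_{n-1})$ gives the claim. (Equivalently, this is Jacobi's identity for the $(n-1)\times(n-1)$ principal block of $(A^{T}A)^{-1}$.)

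It remains to rewrite the formulas of \cite{KP} in the stated form. Parametrizing $c_x$ by $t$ we have $dH^1_{c_x}=|\dot c_x|\,dt$ and $(J^{n-1}F)^{q-1}\circ c_x=(|\dot c_x|/J_f)^{q-1}$, so
\begin{equation*}
l(x)=\int_a^b\Bigl(\frac{|\dot c_x|}{J_f}\Bigr)^{q-1}|\dot c_x|\,dt=\int_a^b\Bigl(\frac{|\dot c_x|}{J_f}\Bigr)^{q}J_f\,dt,
\end{equation*}
and $f_{\Gamma}(z)=\frac{1}{l(x)}\bigl(|\dot c_x|/J_f\bigr)^{q-1}\circ f^{-1}(z)$, which are the asserted expressions; the formula for ${\rm mod}_p(\Gamma)$ is already in the required form (its derivation in \cite{KP} uses $p(q-1)-1=q-1$).

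I expect the genuine obstacle to be not in these computations — the algebraic lemma and the substitutions are elementary — but in the first step: one must check that $F=\pi\circ f^{-1}$ inherits enough regularity from the $W^{1,p}$ homeomorphism $f$, and that the set where $J_f$ vanishes or $f$ fails to be a local diffeomorphism is negligible for $\Gamma$, so that the coarea--based submersion formula of \cite{KP} applies and the curves $c_x$ carry the expected $H^1$--measure. This is exactly the place where the Sobolev hypothesis, rather than smoothness, has to be handled with care.
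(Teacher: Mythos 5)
Your proposal follows exactly the paper's strategy: you realize $\Gamma$ as the family of level sets of $F=\pi\circ f^{-1}$, invoke the Kalina--Pierzchalski formula (Theorem \ref{thm:levelsets}), and bridge the two via the jacobian identity $|J_F|\circ f=|\dot c_x|/J_f$, which is precisely the $m=1$ case of Lemma \ref{lem:main} (your dual-basis/Gram-determinant proof of that identity is a correct, slightly more geometric substitute for the paper's matrix computation with $X=(B^{\top}\,A')$). The regularity caveat you flag at the end is real and unresolved in your argument, but it is exactly the gap the paper itself acknowledges by assuming $C^1$-smoothness in Theorem \ref{thm:main} and deferring the genuine $W^{1,p}$ case to \cite{BMV}.
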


In this case homeomorphism $f$ maps straight lines parametrized by some compact set $D$. The version stated here slightly differs from the original stated in \cite{BMV}, where the authors consider $f\circ u$ instead of $f$, $u$ being $C^1$--smooth diffeomorphism defining a condenser (see the following sections for more details). Such approach was probably chosen by the authors of \cite{BMV} in order to compute modulus of families of curves and its diffeomorphic images.

\subsection{Statement of the result}
In this article, we generalize Theorem \ref{thm:BMV} to arbitrary codimension. We rely on the formula for the $p$--modulus of a family of level sets of a submersion proved (in the smooth category) in \cite{KP}. We could consider the approach analogous to the one used in \cite{BMV}, where the authors apply the result of Badger \cite{Badger} for the extremal function. However, we want to keep the article as elementary as possible and, additionally, give an alternative proof of known results. Since, we are not mainly interested in the $p$--modulus of families separating plates of condenser or joining these plates and we focus on the general formula for the $p$--modulus we consider parametrizations (as usual) defined on open domains. Clearly, it is easy to rewrite the main formula for compact sets, just taking the parametrization defined on some (open) neighborhood.

Let us state the main result of the paper.
\begin{thm}\label{thm:main}
Let $U$ and $V$ be two domains in $\mathbb{R}^{n-m}$ and $\mathbb{R}^m$, respectively. Let $f:U\times Y\to\Omega$ be $C^1$--smooth diffeomorphism. Denote by $\Sigma$  the family of $m$--dimensional surfaces $\sigma_x$, $x\in U$, being the images of $V$ with respect to $f$, $\sigma_x=f(x,V)$. Then the extremal function for the $p$--modulus of $\Sigma$ is the following
\begin{equation}\label{eq:mainextremal}
f_{\Sigma}(z)=\frac{1}{l(x)}\left(\frac{|J^y_f|}{|J_f|}\right)^{q-1}\circ f^{-1}(z),\quad z=f(x,y),
\end{equation}
where
\begin{equation}\label{eq:mainhat}
l(x)=\int_V \left(\frac{|J^y_f|}{|J_f|}\right)^q J_f\,dy.
\end{equation}
Moreover, the $p$--modulus of $\Sigma$ equals
\begin{equation}\label{eq:mainmod}
{\rm mod}_p(\Sigma)=\int_U l(x)^{1-p}\,dx.
\end{equation}
\end{thm}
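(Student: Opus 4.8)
The plan is to reduce Theorem~\ref{thm:main} to the known formula for the $p$--modulus of a family of level sets of a submersion (the result of \cite{KP} referenced throughout the introduction). The bridge is an algebraic identity generalizing Lemma~\ref{lem:algebraic1} to arbitrary codimension. Concretely, define $F:\Omega\to U$ by $F=\pi\circ f^{-1}$, where $\pi:U\times V\to U$ is the projection onto the first factor. Then $F$ is a $C^1$ submersion whose fibers $F^{-1}(x)$ are exactly the surfaces $\sigma_x$, so the formula from \cite{KP} applies: the extremal function is $f_\Sigma(z)=\widehat{\mathcal{J}_F}^{\,1-q}\,\mathcal{J}_F^{\,q-1}\circ f^{-1}(z)$ with the hat denoting integration over the fiber $\sigma_x$ against its $m$--dimensional Hausdorff measure, and ${\rm mod}_p(\Sigma)=\int_U \widehat{\mathcal{J}_F}^{\,1-p}\,dx$, where $\mathcal{J}_F$ is the generalized (coarea) Jacobian of $F$, i.e. $\mathcal{J}_F=\sqrt{\det\big(DF\,(DF)^{\mathsf T}\big)}$. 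So the entire theorem follows once we prove the pointwise identity
\begin{equation}\label{eq:plankey}
\mathcal{J}_F\circ f=\frac{|J^y_f|}{|J_f|},
\end{equation}
and check that the Hausdorff volume element on $\sigma_x$ pulls back to $|J^y_f|\,dy$, so that $\widehat{\mathcal{J}_F}$ becomes exactly $l(x)$ in \eqref{eq:mainhat}.

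The first step is the linear-algebra lemma behind \eqref{eq:plankey}: let $A\in GL(n,\mathbb{R})$ be split into its first $n-m$ columns forming a block $U_0$ and its last $m$ columns forming a block $W_0$; let $P$ be the $(n-m)\times n$ matrix consisting of the first $n-m$ rows of $A^{-1}$. The analogue of the conditions $\skal{v}{u}=1$, $\skal{v}{w}=0$ is $PU_0=I_{n-m}$ and $PW_0=0$, which indeed characterize $P$ as those rows of $A^{-1}$. The claim is that $\sqrt{\det(PP^{\mathsf T})}=|\det(W_0^{\mathsf T}W_0)|^{1/2}/|\det A|$. This is the direct generalization of $|v|=|w|/|\det A|$ and should follow from a block-inversion/Schur-complement computation, or more slickly from the Cauchy--Binet formula: the rows of $P$ span the orthogonal complement of the column space of $W_0$, and the Gram determinant of a spanning set of that complement is controlled by $\det A$ via the identity $\det(A^{-1})^{\mathsf T}(A^{-1}) \cdot \det(A^{\mathsf T}A)=1$ applied blockwise. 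I would present it as a standalone lemma in the same style as Lemma~\ref{lem:algebraic1}.

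The second step is to apply this with $A=D f$ evaluated at $(x,y)$ and $f^{-1}$ identifying $z=f(x,y)$. Here $U_0=D_xf$ is the block of partials in the $U$--directions, $W_0=D_yf$ is the block of partials in the $V$--directions; $J_f=\det Df$ (positive, or taken in absolute value), and $|J^y_f|$ is by definition $\sqrt{\det(D_yf^{\mathsf T}D_yf)}$, the $m$--dimensional Jacobian of $y\mapsto f(x,y)$, which is precisely the factor by which $H^m$ on $\sigma_x$ is distorted. Differentiating $F\circ f=\pi$ gives $DF|_{f(x,y)}\cdot Df|_{(x,y)}=[\,I_{n-m}\ \ 0\,]$, i.e. $DF\circ f$ is exactly the matrix $P$ of the lemma with $A=Df$. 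Hence the lemma yields \eqref{eq:plankey} immediately, and substituting into the \cite{KP} formulas gives \eqref{eq:mainextremal}, \eqref{eq:mainhat}, \eqref{eq:mainmod} after the change of variables $z=f(x,y)$ (with $dz=J_f\,dx\,dy$) in the modulus integral — this is where the extra factor $J_f$ in \eqref{eq:mainhat} and the Fubini split $dx\,dy$ producing the outer $\int_U(\cdot)^{1-p}dx$ come from.

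The main obstacle is the algebraic lemma in arbitrary codimension: in the $m=1$ case the orthogonal complement of a hyperplane is a line and one just reads off a single row of $A^{-1}$, but for general $m$ one must correctly relate the Gram determinant $\det(PP^{\mathsf T})$ of an $(n-m)$--dimensional subspace to the Gram determinant $\det(W_0^{\mathsf T}W_0)$ of the complementary $m$--dimensional subspace and to $\det A$. I expect the cleanest route is to write $A^{-1}$ in $2\times2$ block form, observe $P=[\,(A^{-1})_{11}\ \ (A^{-1})_{12}\,]$, and compute $\det(PP^{\mathsf T})$ as a principal minor of $A^{-1}(A^{-1})^{\mathsf T}=(A^{\mathsf T}A)^{-1}$, then invoke the cofactor/Jacobi identity that a principal $(n-m)\times(n-m)$ minor of $M^{-1}$ equals the complementary principal $m\times m$ minor of $M$ divided by $\det M$; with $M=A^{\mathsf T}A$ this complementary minor is $\det(W_0^{\mathsf T}W_0)$ and $\det M=(\det A)^2$, giving the result. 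A secondary, more routine point is to make sure all the measure-theoretic identifications (Hausdorff measure on $\sigma_x$ versus $|J^y_f|\,dy$, and applicability of \cite{KP} in the $C^1$ category for a genuine diffeomorphism) are cleanly quoted rather than reproved.
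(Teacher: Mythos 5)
Your proposal is correct and follows the same overall architecture as the paper: set $F=\pi\circ f^{-1}$, invoke the Kalina--Pierzchalski level-set formula, and reduce everything to the pointwise Jacobian identity $|J_F|\circ f=|J^y_f|/|J_f|$ obtained from a linear-algebra lemma applied to $A=Df$ and $B=DF\circ f$ with $BA=(I_{n-m}\;0)$. The one genuine difference is how that lemma is proved. You identify $B$ with the first $n-m$ rows $P$ of $A^{-1}$, recognize $PP^{\top}$ as the leading principal $(n-m)\times(n-m)$ block of $(A^{\top}A)^{-1}$, and apply Jacobi's complementary-minor identity to get $\det(PP^{\top})=\det(W_0^{\top}W_0)/(\det A)^2$; this is valid and arguably more systematic, since it places the statement inside a standard determinantal identity. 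The paper instead avoids quoting Jacobi's identity by forming the auxiliary matrix $X=(B^{\top}\,A')$, using orthogonality of its two column blocks to get $\det X=|B||A'|$, and computing $A^{\top}X$ in block-triangular form to get $\det A\cdot\det X=|A'|^2$ --- a shorter, self-contained computation in the spirit of their two-dimensional Lemma \ref{lem:algebraic1}. Two small points to tidy in your write-up: the normalizing factor in the quoted extremal function should be $\bigl(\widehat{\mathcal{J}_F^{\,q-1}}\bigr)^{-1}$ (the hat taken of the $(q-1)$-st power), not $\widehat{\mathcal{J}_F}^{\,1-q}$ --- your later identification of $l(x)$ shows you mean the former; and the factor $J_f$ in \eqref{eq:mainhat} arises from the area formula $dH^m_{\sigma_x}=|J^y_f|\,dy$ together with $|J^y_f|=(|J^y_f|/|J_f|)\,|J_f|$, not from a change of variables $dz=J_f\,dx\,dy$ in the ambient integral, which is never needed because the \cite{KP} modulus formula is already an integral over $U$.
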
 

Let us explain the notation used in the theorem above. The jacobian $|J^y_f|$ is the jacobian of a map $V\ni y\mapsto f(x,y)$ with fixed $x\in U$. Recall, that the jacobian here is a square root of sum of second powers of determinants of maximal minors of the differential.

In our approach, we assume $C^1$--smoothness of $f$. Less restrictive conditions were assumed in \cite{BMV}, namely, that $f$ is in the Sobolev class $W^{1,p}$ with finite distortion. We comment on this in the last section.

Throughout the paper $p$ and $q$ are conjugate coefficients, i.e., $\frac{1}{p}+\frac{1}{q}=1$.

\section{Rodin's formula in any codimension}

\subsection{Necessary facts}
Let us recall the co--area formula and the the formula for the extremal function for the $p$--modulus of a family of level sets of a submersion.

Let $F$ be a $C^1$--diffeomorphism of a domain $\Omega\subset\mathbb{R}^n$ onto $U\subset\mathbb{R}^m$. Denote by $\sigma_x$ the level set $F^{-1}(x)$ and let $H^m_{\sigma_x}$ be $m$--dimensional Hausdorff measure on $\sigma_x$. The following is a well--known fact.

\begin{thm}[Co--area formula]\label{thm:coarea}
For any integrable function $g$ on $\Omega$ the following formula holds
\begin{equation*}
\int_{\Omega} g(z)|J_F(z)|\,dz=\int_U\int_{\sigma_x} g(z)\,dH^m_{\sigma_x}(z)\,dx,
\end{equation*} 
\end{thm}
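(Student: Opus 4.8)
The plan is to reduce the co--area formula to an application of Fubini's theorem together with the change--of--variables (area) formula, so that the whole content concentrates in a single pointwise identity between Jacobians. Since the claim is local and both sides are additive and monotone in $g$, I would first establish it for $g$ the characteristic function of a small coordinate ball and then extend to arbitrary integrable $g$ by a standard monotone/dominated convergence argument. It therefore suffices to argue on a neighborhood over which the fibration defined by $F$ is trivialized.

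Concretely, I would pick a local (in the setting of Theorem \ref{thm:main}, global) $C^1$ parametrization $\Psi(x,y)$ of $\Omega$ adapted to $F$, i.e.\ with $F(\Psi(x,y))=x$, where $x$ ranges over the base and $y$ over an $m$--dimensional parameter domain; such a $\Psi$ exists by the implicit function theorem applied to the submersion $F$ (in the situation of Theorem \ref{thm:main} one simply takes $\Psi=f$). On the left--hand side I would change variables $z=\Psi(x,y)$, picking up the full Jacobian $|\det D\Psi|$, while on the right--hand side I would evaluate each fiber integral $\int_{\sigma_x} g\,dH^m_{\sigma_x}$ by the area formula through the parametrization $y\mapsto\Psi(x,y)$, picking up the fiber Jacobian $J^y_\Psi=\sqrt{\det((\partial_y\Psi)^T\partial_y\Psi)}$. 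After swapping the order of integration by Fubini, the two sides agree if and only if the pointwise identity
\begin{equation*}
|J_F(\Psi(x,y))|\,|\det D\Psi(x,y)|=J^y_\Psi(x,y)
\end{equation*}
holds almost everywhere.

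This last identity is where the real work lies, and it is a higher--codimension analogue of Lemma \ref{lem:algebraic1}. Differentiating $F\circ\Psi=\pi$ gives $DF\cdot\partial_x\Psi=I$ and $DF\cdot\partial_y\Psi=0$, so the rows of $DF$ (the gradients of the components of $F$) span the normal space $T^\perp$ to the fiber, while the columns of $\partial_y\Psi$ span the tangent space $T$. I would split $\mathbb{R}^n=T\oplus T^\perp$ and expand the full determinant $|\det D\Psi|=|\det[\,\partial_x\Psi\,|\,\partial_y\Psi\,]|$ as the product of the fiber volume $J^y_\Psi$ and the $(n-m)$--volume of the orthogonal projection of $\partial_x\Psi$ onto $T^\perp$. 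The relations $DF\cdot\partial_x\Psi=I$ and $DF\cdot\partial_y\Psi=0$ then show that $DF$ restricted to $T^\perp$ and that projection are mutually inverse maps between $(n-m)$--dimensional spaces, so the product of their determinants is $1$; reading $J_F=\sqrt{\det(DF\,DF^T)}$ as the determinant of $DF|_{T^\perp}$ in an orthonormal frame yields exactly the displayed identity (equivalently, one obtains it directly from the Cauchy--Binet formula).

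The main obstacle is precisely this Jacobian identity: the measure--theoretic steps (area formula, Fubini, the reduction from simple to integrable $g$, and a partition of unity if one lacks a global trivialization) are routine, whereas the cancellation between $|J_F|$, the full Jacobian of the parametrization and the induced Hausdorff measure on the fibers must be checked by the linear--algebra argument above. I would take care that $J_F\neq 0$ throughout (guaranteed by $F$ being a submersion, i.e.\ the nonvanishing of $|J_f|$), so that the fibers are genuine $C^1$ submanifolds and the area formula applies on each of them.
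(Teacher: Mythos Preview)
The paper does not prove Theorem~\ref{thm:coarea}; it is introduced with ``The following is a well--known fact'' and simply quoted. So there is no proof in the paper to compare against.

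Your outline is correct and is the standard derivation of the co--area formula for $C^1$ submersions: trivialize locally, apply the change--of--variables formula on the left, the area formula fiberwise on the right, Fubini to match the orders of integration, and reduce everything to a pointwise Jacobian identity. The measure--theoretic steps you list (characteristic functions first, monotone/dominated convergence, partition of unity for globalization) are indeed routine.

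What is worth noting is that the ``real work'' you isolate, the identity
\[
|J_F(\Psi(x,y))|\,|\det D\Psi(x,y)|=J^y_\Psi(x,y),
\]
is \emph{exactly} Lemma~\ref{lem:main} of the paper: set $A=D\Psi$, $B=(DF)\circ\Psi$, $A'=\partial_y\Psi$, and the hypothesis $B\cdot A=(I_{n-m}\ 0)$ is precisely $F\circ\Psi=\pi$ differentiated. The paper's logical architecture is the reverse of yours: it \emph{assumes} the co--area formula (Theorem~\ref{thm:coarea}) as known, proves the algebraic Lemma~\ref{lem:main} independently, and then combines both with Theorem~\ref{thm:levelsets} to obtain the main result. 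Your approach shows that Lemma~\ref{lem:main} is already the heart of the co--area formula itself, so that in the $C^1$ setting Theorems~\ref{thm:coarea} and~\ref{thm:levelsets} together with Lemma~\ref{lem:main} are not three independent inputs but essentially two. Your linear--algebra justification via the splitting $T\oplus T^\perp$ is a valid alternative to the paper's proof of Lemma~\ref{lem:main}, which instead computes $A^\top X$ for the auxiliary matrix $X=(B^\top\ A')$.
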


Denote by $\Sigma$ a family of level sets $\sigma_x$, $x\in U$, or, more precisely, the family of $m$--Hausdorff measures on these level sets. Then the following fact holds \cite{KP} (see also \cite{MCN}).

\begin{thm}[Modulus of level sets]\label{thm:levelsets} 
Assume ${\rm mod}_p(\Sigma)>0$. Then, the extremal function and the $p$--modulus of $\Sigma$ are given, respectively, by
\begin{equation*}
f_{\Sigma}=\frac{|J_F|^{q-1}}{\widehat{|J_F|^{q-1}}\circ F}\quad\textrm{and}\quad {\rm mod}_p(\Sigma)=\int_{U}\left(\widehat{|J_F|^{q-1}}\right)^{1-p}\,dx.
\end{equation*}
\end{thm}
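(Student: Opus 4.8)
The plan is to identify $f_\Sigma$ as the minimizer of $\int_\Omega f^p\,dz$ over ${\rm adm}(\Sigma)$ by the classical two-sided scheme: show the proposed function is admissible and compute its $p$-energy (this gives the upper bound ${\rm mod}_p(\Sigma)\le\int_U(\widehat{|J_F|^{q-1}})^{1-p}\,dx$), then establish the matching lower bound for every competitor via a fiberwise H\"older estimate fed into the co-area formula. For brevity write $\Phi(x)=\widehat{|J_F|^{q-1}}(x)=\int_{\sigma_x}|J_F|^{q-1}\,dH^m_{\sigma_x}$, so that $f_\Sigma=|J_F|^{q-1}/(\Phi\circ F)$.

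First I would check admissibility directly. Since $F$ is constant on each level set, $\Phi\circ F$ is constant along $\sigma_x$ and can be pulled out, giving
\begin{equation*}
\int_{\sigma_x}f_\Sigma\,dH^m_{\sigma_x}=\frac{1}{\Phi(x)}\int_{\sigma_x}|J_F|^{q-1}\,dH^m_{\sigma_x}=1,
\end{equation*}
so $f_\Sigma\in{\rm adm}(\Sigma)$, in fact with equality on every fiber. For the energy I would use the conjugacy relation $(q-1)p=q$ to write $f_\Sigma^p=\big(|J_F|^{q-1}/(\Phi\circ F)^p\big)\,|J_F|$ and apply the co-area formula (Theorem \ref{thm:coarea}):
\begin{equation*}
\int_\Omega f_\Sigma^p\,dz=\int_U\frac{1}{\Phi(x)^p}\int_{\sigma_x}|J_F|^{q-1}\,dH^m_{\sigma_x}\,dx=\int_U\Phi(x)^{1-p}\,dx,
\end{equation*}
which is exactly the claimed value.

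The decisive step is the matching lower bound. For an arbitrary $g\in{\rm adm}(\Sigma)$ I would split, on each fiber, $g=(g\,|J_F|^{-1/p})\,|J_F|^{1/p}$ and apply H\"older with exponents $p,q$, using $q/p=q-1$ in the second factor:
\begin{equation*}
1\leq\int_{\sigma_x}g\,dH^m_{\sigma_x}\leq\left(\int_{\sigma_x}\frac{g^p}{|J_F|}\,dH^m_{\sigma_x}\right)^{1/p}\Phi(x)^{1/q}.
\end{equation*}
Raising to the power $p$ and using $-p/q=1-p$ gives $\int_{\sigma_x}g^p|J_F|^{-1}\,dH^m_{\sigma_x}\geq\Phi(x)^{1-p}$; integrating in $x$ through the co-area formula (now with $g^p=(g^p|J_F|^{-1})\,|J_F|$) yields $\int_\Omega g^p\,dz\geq\int_U\Phi(x)^{1-p}\,dx$. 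Since $f_\Sigma$ attains this value, it is a minimizer, the infimum equals $\int_U\Phi^{1-p}\,dx$, and the equality case of H\"older (realized precisely by $g=f_\Sigma$) together with the general uniqueness for the $p$-modulus identifies $f_\Sigma$ as the extremal function up to a subfamily of $p$-modulus zero.

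The inequalities are routine; the hard part will be the bookkeeping around finiteness and positivity, which is where the hypothesis ${\rm mod}_p(\Sigma)>0$ enters. The formula for $f_\Sigma$ presupposes $0<\Phi(x)<\infty$ for almost every $x$: the value $\Phi(x)=0$ cannot occur because $|J_F|>0$ under the regularity assumed on $F$, while if $\Phi(x)=\infty$ on a set of positive measure the candidate would vanish there and fail admissibility, so I must argue that this degeneracy is excluded exactly when the modulus is positive. I would also have to verify measurability of $x\mapsto\Phi(x)$ and the integrability required to invoke the co-area formula in both directions, and handle the exceptional set carefully so that the final statement holds up to a family of $p$-modulus zero.
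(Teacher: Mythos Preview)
Your proposal is correct and follows essentially the same route as the paper: admissibility of $f_\Sigma$ is checked directly, and the lower bound for any competitor comes from the fiberwise H\"older splitting $g=(g\,|J_F|^{-1/p})\,|J_F|^{1/p}$ combined with the co-area formula, exactly as in the paper's proof. Your explicit separation into an upper-bound computation and a lower-bound estimate, together with the remarks on finiteness of $\Phi$ and measurability, is a cleaner packaging of the same argument (the paper glosses over these technical points).
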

In the statement of above theorem $\hat{f}$ denotes the integral of non--negative function $f$ on the level sets of $F$, i.e., 
\begin{equation*}
\hat{f}(x)=\int_{\sigma_x} f(z)\,dH^m_{\sigma_x}(z),\quad x\in U. 
\end{equation*}
In order to keep the article self contained we recall the proof of Theorem \ref{thm:levelsets}.
\begin{proof}
Clearly, the integral of $f_{\Sigma}$ on any level set equals $1$,
\begin{equation*}
\hat{f_{\Sigma}}=\frac{\widehat{|J_F|^{q-1}}}{\widehat{|J_F|^{q-1}}}\circ F=1.
\end{equation*}
Thus $f_{\Sigma}$ is admissible for the $p$--modulus of $\Sigma$. By H\"older inequality, for any admissible $f$, we have
\begin{align*}
1\geq \int_{\sigma_x} f\,dH^m_{\sigma_x} &=
\int_{\sigma_x}\frac{f}{|J_F|^{\frac{1}{p}}}|J_F|^{\frac{1}{p}}\,dH^m_{\sigma_x}\\
&\leq \left(\int_{\sigma_x}\frac{f^p}{|J_F|}\,dH^m_{\sigma_x}\right)^{\frac{1}{p}}\left(\int_{\sigma_x}|J_F|^{\frac{q}{p}}\,dH^m_{\sigma_x}\right)^{\frac{1}{q}}.  
\end{align*}
Thus
\begin{equation*}
\int_{\sigma_x}\frac{f^p}{|J_F|}\,dH^m_{\sigma_x}\geq\left(\int_{\sigma_x}|J_F|^{q-1}\,dH^m_{\sigma_x}\right)^{1-p}.
\end{equation*}
By the co-area formula (Theorem \ref{thm:coarea}) we get
\begin{align*}
\int_{\Omega} f^p\,dz &=\int_U\int_{\sigma_x}\frac{f^p}{|J_F|}\,dH^m_{\sigma_x}\,dy\\
&\geq\int_U\widehat{|J_F|^{q-1}}^{1-p}\,dx\\
&=\int_U\int_{\sigma_x}\frac{|J_F|^{q-1}}{\widehat{|J_F|^{q-1}}^p}\,dH^m_{\sigma_x}\,dx\\
&=\int_{\Omega}\frac{|J_F|^q}{\widehat{|J_F|^{q-1}}^p}\,dz\\
&=\int_{\Omega}f_{\Sigma}^p\,dz.
\end{align*}
Thus $f_{\Sigma}$ is extremal and the formula for ${\rm mod}_p(\Sigma)$ holds.
\end{proof}

\subsection{An Algebraic lemma}
In this subsection we will prove the key algebraic fact, that will be used in the proof of the main theorem of the article. It is necessary for the relation between jacobians of appropriate mappings.

Denote by $\mathcal{M}(n,m)$ the space of matrices consisting of $n$ rows and $m$ columns. For a matrix $A\in\mathcal{M}(n,m)$ denote by $|A|^2$ a sum of determinants of all maximal rank minors. In particular, if $A\in\mathcal{M}(n,n)$, then $|A|=\det A$ and if $A\in\mathcal{M}(n,m)$ with $n>m$, then $|A|^2=\det(A^{\top}A)$.

The following lemma is a generalization of Lemma \ref{lem:algebraic1} to arbitrary dimension. We could try to adapt the proof to this case by applying successively Laplace expansion, however, it requires a lot of computations. We present, in our opinion, more elegant and brief proof. 

\begin{lem}\label{lem:main}
Let $A\in GL(n,\mathbb{R})$ and $B\in\mathcal{M}(n-m,n)$. If $B\cdot A=(I_{n-m}\, 0)$, then
\begin{equation*}
|A'|=|A||B|,
\end{equation*}
where $A'$ is made of last $m$ columns of $A$.
\end{lem}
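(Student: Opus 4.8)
The plan is to exploit the block structure imposed by the hypothesis $B\cdot A=(I_{n-m}\ 0)$. Write $A=(A''\ A')$ where $A''$ consists of the first $n-m$ columns and $A'$ of the last $m$ columns. The condition says $BA''=I_{n-m}$ and $BA'=0$. The first relation tells us $B$ is a left inverse of the (full column rank) matrix $A''$; since $A\in GL(n,\mathbb{R})$, the rows of $B$ are precisely the first $n-m$ rows of $A^{-1}$. This is the genuine content of the hypothesis, and everything should follow by a determinant computation.

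The key step is the following. Complete $B$ to an invertible $n\times n$ matrix by stacking it on top of some $m\times n$ matrix $C$; the natural choice is $C=(0\ I_m)$ acting in the original coordinates, but it is cleaner to argue abstractly. Consider the product
\begin{equation*}
\begin{pmatrix} B\\ C\end{pmatrix}A=\begin{pmatrix} BA''&BA'\\ CA''&CA'\end{pmatrix}=\begin{pmatrix} I_{n-m}&0\\ CA''&CA'\end{pmatrix},
\end{equation*}
whose determinant, by the block-triangular form, equals $\det(CA')$. On the other hand it equals $\det\begin{pmatrix}B\\ C\end{pmatrix}\cdot\det A$. Choosing $C$ so that $\det\begin{pmatrix}B\\ C\end{pmatrix}$ is computable — in fact one wants $C$ chosen so that the $m\times m$ matrix $CA'$ and the quantity $\det\begin{pmatrix}B\\ C\end{pmatrix}$ are related to $|A'|$ and $|B|$ — is the crux. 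The cleanest route: recall that for a full-rank $B\in\mathcal{M}(n-m,n)$ one has $|B|^2=\det(BB^\top)$, and similarly $|A'|^2=\det(A'^\top A')$. So I would instead compute $|A'|^2$ directly: since $B A'=0$, the columns of $A'$ span the orthogonal complement (after transposing, the appropriate subspace) and a Cauchy–Binet / Gram determinant identity should give $\det(A'^\top A')\cdot\det(BB^\top)=(\det A)^2$ up to checking signs, i.e. $|A'|\,|B|=|{\det A}|$. Since $A\in GL(n,\mathbb R)$, one can fix signs so the stated equality $|A'|=|A||B|$ holds with $|A|=\det A$ as defined in the paper.

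Concretely, here is the computation I expect to carry out. The rows of $B$ are the first $n-m$ rows of $A^{-1}$, so $BB^\top=(A^{-1}(A^{-1})^\top)_{[1..n-m],[1..n-m]}=((A^\top A)^{-1})_{[1..n-m],[1..n-m]}$, the top-left $(n-m)\times(n-m)$ block of $(A^\top A)^{-1}$. By the cofactor formula for the inverse, $\det$ of this block equals $\det\big((A^\top A)_{[n-m+1..n],[n-m+1..n]}\big)/\det(A^\top A)=\det(A'^\top A')/(\det A)^2$ — this is the standard identity (Jacobi's identity for complementary minors of a matrix and its inverse). Hence $|B|^2=|A'|^2/(\det A)^2$, which rearranges to $|A'|=|\det A|\,|B|$. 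The main obstacle is getting this Jacobi-complementary-minor identity stated and applied correctly, and handling the sign/absolute-value bookkeeping so that the final identity reads exactly as $|A'|=|A||B|$ with the paper's conventions ($|A|=\det A$ when $A$ is square); everything else is routine linear algebra.
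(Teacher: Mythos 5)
Your proof is correct, but it takes a genuinely different route from the paper's. You identify $B=(I_{n-m}\ 0)A^{-1}$ as the top $n-m$ rows of $A^{-1}$, compute $BB^{\top}$ as the leading principal $(n-m)\times(n-m)$ block of $(A^{\top}A)^{-1}$, and invoke Jacobi's complementary-minor identity to get
\begin{equation*}
|B|^2=\det\bigl(BB^{\top}\bigr)=\frac{\det\bigl((A')^{\top}A'\bigr)}{\det(A^{\top}A)}=\frac{|A'|^2}{(\det A)^2}.
\end{equation*}
All the individual steps check out (the identification of $B$, the formula $BB^{\top}=((A^{\top}A)^{-1})_{[1..n-m],[1..n-m]}$, and the application of Jacobi's identity with $I=I^c$ complementary index sets). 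The paper instead forms the auxiliary matrix $X=(B^{\top}\ A')$, uses the orthogonality of its two column blocks (from $BA'=0$) to get $\det X=|B||A'|$, and then computes $A^{\top}X$, which is block upper triangular with diagonal blocks $I_{n-m}$ and $(A')^{\top}A'$, yielding $\det A\cdot\det X=|A'|^2$; combining the two gives the claim. The trade-off: the paper's argument is entirely self-contained elementary determinant manipulation, whereas yours outsources the work to Jacobi's identity — a standard fact, but one whose usual proof is essentially the same block-matrix trick, so you are in effect invoking a packaged version of the paper's computation. Note also that your first sketch (completing $B$ to an invertible matrix and using block-triangularity) is close in spirit to the paper's $A^{\top}X$ step and could have been pushed through, but you rightly abandoned it since choosing $C$ to relate $\det\begin{pmatrix}B\\ C\end{pmatrix}$ and $\det(CA')$ to $|B|$ and $|A'|$ is exactly the nontrivial point. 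Finally, the sign caveat you flag (getting $|A'|=|\det A|\,|B|$ rather than $|A'|=(\det A)|B|$) is not a defect of your argument: the paper's own convention $|A|=\det A$ for square $A$ is inconsistent with $|A|^2$ being a sum of squares of minors, and the paper's proof has the same ambiguity; in the application only $|J_f|$ in absolute value is used.
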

\begin{proof}  
Split matrix $A$ in the following way $A=(A_0\,A')$, where $A_0\in\mathcal{M}(n,n-m)$. The assumption is equivalent to saying that $BA_0=I_{n-m}$ and $BA'=0$. Consider a matrix of the form $X=(B^{\top}\, A')\in\mathcal{M}(n,n)$. The subspaces spanned by first $n-m$ and last $m$ columns of $X$ are, by assumption, orthogonal. Thus
\begin{equation}\label{eq:lemmaproof}
|X|=|B^{\top}||A'|=|B||A'|.   
\end{equation}
Let us consider the matrix $A^{\top}X\in\mathcal{M}(n,n)$. We can express it, by the assumption, in the following form
\begin{equation*}
A^{\top}X=(A^{\top}B^{\top}\, A^{\top}A')=((BA)^{\top}\,A^{\top}A')
=\left(\begin{array}{cc} I_{n-m} & A_0^{\top}A' \\ 0 & (A')^{\top}A' \end{array}\right).
\end{equation*}
Hence, $|A||X|=|(A')^{\top}A'|=|A'|^2$. This, together with \eqref{eq:lemmaproof}, implies $|A||B|=|A'|$.
\end{proof}

\subsection{Proof of Rodin's formula in any codimension}

Let $F:\Omega\to U$ be of the form $F=\pi\circ f^{-1}$, where $\pi:U\times V\to U$ is a projection on $U$. Then $F$ is a submersion with jacobian $J_F$. Differentiating the condition $F\circ f=\pi$ we get
\begin{equation*}
(DF\circ f)\cdot Df=(I_{n-m}\,0). 
\end{equation*}
Applying Lemma \ref{lem:main} to $A=Df$ and $B=DF\circ f$ (then $A'=D_yf$), we get
\begin{equation*}
|J^y_f|=|J_f|(|J_F|\circ f).
\end{equation*}
Thus the formula for $l(x)$ takes the form
\begin{equation*}
l(x)=\int_U \left(\frac{|J^y_f|}{|J_f|}\right)^{q-1}|J^y_f|\,dx
=\int_{\sigma_x}|J_F|^{q-1}\,dH^m_{\sigma_x}=\widehat{|J_F|^{q-1}}(x).
\end{equation*}
Thus, by Theorem \ref{thm:levelsets} we have
\begin{equation*}
f_{\Sigma}(z)=\frac{1}{l(x)}\left(\frac{|J^y_f|}{|J_f|}\right)^{q-1}\circ f^{-1}(z),\quad f(x,y)=z,
\end{equation*}
and ${\rm mod}_p(\Sigma)=\int_U l(x)^{1-p}\,dx.$

\section{Some Applications}

In this section we give some applications of obtained formulae \eqref{eq:mainextremal} and \eqref{eq:mainmod}. We consider generalizations of examples studied in \cite{BMV} except for the case of conjugate submersions (Example \ref{exa:conjugate} below).

\begin{exa}[Generalized condenser]\label{exa:condenser}
In this example, we will discus the formula obtained in \cite{BMV} for a curve (or surface) family in a condenser defined by some function $u$. Namely, let $u:U\times V\to\mathbb{R}^n$ be a $C^1$--diffeomorphism, where $U$ and $V$ are domains in $\mathbb{R}^{n-m}$ and $\mathbb{R}^n$, respectively. Denote by $\Sigma_0$ the family of all $m$--dimensional surfaces $S_x$, $x\in U$, in $\mathbb{R}^n$ of the form $S_x=\{u(x,y)\mid y\in V\}$. Moreover, let $f$ be a $C^1$--diffeomorphism of $u(U\times V)$ and let $\Sigma=f(\Sigma_0)$, meaning that $\Sigma$ is a family of surfaces $\sigma_x=f(S_x)$. Applying Theorem \ref{thm:main} to the map $u\circ f$ we get the following formulae for the extremal function and the $p$--modulus of $\Sigma$.
\begin{cor}\label{cor:generalizedcondenser}
The extremal function $f_{\Sigma}$ and the $p$--modulus ${\rm mod}_p(\Sigma)$ of the family $\Sigma$ are given, respectively, by
\begin{align*}
f_{\Sigma}(z) &=\frac{1}{l(x)}\left(\frac{|I^y_f|}{|I_f|}\right)^{q-1}\circ f^{-1}(z), \quad z=(f\circ u)(x,y),\\
{\rm mod}_p(\Sigma_0) &=\int_U l(x)^{1-p}\,dx,
\end{align*}
where $I_f=J_{f\circ u}=(J_f\circ u)J_u$ and
\begin{equation*}
l(x)=\int_V \left(\frac{|I^y_f|}{|I_f|}\right)^q |I_f|\, dz.
\end{equation*}
\end{cor}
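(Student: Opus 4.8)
The plan is to apply Theorem~\ref{thm:main} directly with the composed diffeomorphism $f\circ u$ in place of the map called ``$f$'' in that theorem. First I would observe that $f\circ u:U\times V\to\mathbb{R}^n$ is indeed a $C^1$--diffeomorphism onto its image $(f\circ u)(U\times V)=f(u(U\times V))$, and that for fixed $x\in U$ the surface parametrized by $V\ni y\mapsto (f\circ u)(x,y)$ is precisely $f(S_x)=\sigma_x$, since $\{u(x,y):y\in V\}=S_x$. Hence the family of surfaces attached to $f\circ u$ by the recipe of Theorem~\ref{thm:main} is exactly $\Sigma$, and the conclusion of that theorem applies verbatim with the jacobians $J_{f\circ u}$ and $J^y_{f\circ u}$.

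Next I would carry out the bookkeeping that turns those jacobians into the quantities $I_f$ and $I^y_f$ appearing in the statement. By the chain rule $D(f\circ u)=(Df\circ u)\cdot Du$, so taking full jacobians gives $J_{f\circ u}=(J_f\circ u)\,J_u=:I_f$. For the fibrewise jacobian, the differential of $y\mapsto (f\circ u)(x,y)$ is $(Df\circ u)\cdot D_yu$, and since $|\cdot|$ is multiplicative under composition with an invertible matrix on the appropriate side (this is exactly the content used already in the main proof, or one can invoke the Cauchy--Binet formula), one gets $|J^y_{f\circ u}|$ expressed through $f$ and the geometry of the fibres; writing this as $|I^y_f|$ is then a matter of definition. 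Substituting $J_{f\circ u}\mapsto I_f$ and $|J^y_{f\circ u}|\mapsto |I^y_f|$ into \eqref{eq:mainextremal}, \eqref{eq:mainhat}, \eqref{eq:mainmod} yields the displayed formulae for $f_\Sigma$, $l(x)$ and ${\rm mod}_p(\Sigma)$, with the integration variable in $l(x)$ renamed $z$ to match the notation of the example.

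The only genuinely substantive point to be careful about is the role of the auxiliary map $u$ versus the ``true'' parametrization: Theorem~\ref{thm:main} already produces the extremal function and modulus for the family cut out by an arbitrary $C^1$--diffeomorphism, so nothing new needs to be proven about admissibility, H\"older sharpness, or the co--area step — all of that is inherited. Thus the main (and essentially only) obstacle is notational consistency, namely making sure that $|I^y_f|$ in the corollary is interpreted as the fibrewise jacobian of $f\circ u$ and not, say, the fibrewise jacobian of $f$ alone, and that the variable $z$ in the integrand of $l(x)$ ranges over $V$ as the parameter, matching the ``$y$'' of the theorem. Once this identification is spelled out, the corollary follows with no further estimation.
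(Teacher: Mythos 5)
Your proposal is correct and follows exactly the route the paper takes: the paper's entire justification for the corollary is the single remark that one applies Theorem~\ref{thm:main} to the composed map, with $I_f=J_{f\circ u}=(J_f\circ u)J_u$ and $|I^y_f|$ the fibrewise jacobian of $f\circ u$. Your additional care about the notational identifications (and the integration variable in $l(x)$) only makes explicit what the paper leaves implicit.
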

\end{exa}

\begin{exa}[Parallel surfaces]\label{exa:parallel}
Consider a product $U\times V$, where $U$ and $V$ are domains in $\mathbb{R}^{n-m}$ and $\mathbb{R}^m$, respectively. Denote by $\Sigma$ the family of surfaces of the form $\{x\}\times V$, $x\in U$. Family $\Sigma$ is defined by the identity map $f={\rm id}$ on $U\times V$. Thus by Theorem \ref{thm:main} the extremal function $f_{\Sigma}$ and  the $p$--modulus of $\Sigma$ are, respectively,
\begin{equation*}
f_{\Sigma}=H^m(V)^{-1}\quad\textrm{and}\quad {\rm mod}_p(\Sigma)=H^{n-m}(U)H^m(V)^{1-p}.
\end{equation*}
In particular, the extremal function is constant. Analogously, considering a family $T$ of 'horizontal' surfaces $U\times\{y\}$, $y\in V$, we have
\begin{equation*}
f_T=H^{n-m}(U)^{-1}\quad\textrm{and}\quad {\rm mod}_p(T)=H^{n-m}(U)^{1-q}H^m(V).
\end{equation*}
In particular,
\begin{equation}\label{eq:productmoduli}
{\rm mod}_p(\Sigma)^{\frac{1}{p}}{\rm mod}_q(T)^{\frac{1}{q}}=1.
\end{equation}
\end{exa}

In the example below we force the condition \eqref{eq:productmoduli} by requiring a relation on restricted jacobians of $f$.

\begin{exa}[Conjugate submersions]\label{exa:conjugate}
In \cite{CP} the authors, generalizing the notion of conformality, introduced, so called, conjugate submersions. These define two orthogonal foliations such that the product of $p$-- and $q$--modulus of them equals one. Treated together they define a diffeomorphism from the manifold to the product of two manifolds. Namely, we say that two mappings $\varphi:M\to R$ and $\psi:M\to S$ between Riemannian manifolds are $(p,q)$--conjugate if $(\varphi,\psi):M\to R\times S$ is a diffeomorphism, distributions ${\rm ker}D\varphi$ and ${\rm ker}D\psi$ are orthogonal and, finally, $|J_{\varphi}|^p=|J_{\psi}|^q$.

Let us reverse this approach (with slight modifications including switching coefficients $p$ and $q$), by considering a (global) parametrization of a domain by a product of two domains. Consider a $C^1$--diffeomorphism $f:U\times V\to\Omega$, where $U$ and $V$ are domains in $\mathbb{R}^{n-m}$ and $\mathbb{R}^m$, respectively, and $\Omega$ is a domain in $\mathbb{R}^n$. Fix two conjugate coefficients $(p,q)$. We say that $f$ is $(p,q)$--{\it conformal} if
\begin{equation*} 
\frac{|J^x_f|^p}{|J_f|^p}=\frac{|J^y_f|^q}{|J_f|^q}\quad\textrm{and} \quad D_xf\perp D_yf.
\end{equation*}
The second condition means that tangent spaces to $f(U,y)$ and $f(x,V)$ are orthogonal for any $(x,y)$. It implies $J^x_f J^y_f=J_f$. Thus, by the first condition we have
\begin{equation}\label{eq:pqmap}
|J^y_f|^p=|J^x_f|^q=|J_f|.
\end{equation} 
We use above condition as a definition of a $(p,q)$--{\it map}. 

Assume, therefore, that $f$ is a $(p,q)$--map. Denote by $\Sigma$ and $T$ the families of $m$ and $(m-n)$--dimensional surfaces defined by
\begin{equation*}
\Sigma=\{\sigma_x=f(x,V)\}_{x\in U},\quad T=\{\tau_y=f(U,y)\}_{y\in V},
\end{equation*}
respectively. By \eqref{eq:pqmap} we have $l_{\Sigma}(x)=H^m(V)$ and $l_T(y)=H^{n-m}(U)$. Thus by Theorem \ref{thm:main}
\begin{equation*}
{\rm mod}_p(\Sigma)=H^{n-m}(U)H^m(V)^{1-p},\quad {\rm mod}_q(T)=H^m(V)H^{n-m}(U)^{1-q}.
\end{equation*}
In particular, ${\rm mod}_p(\Sigma)^{\frac{1}{p}}{\rm mod}_q(T)^{\frac{1}{q}}=1$.
\end{exa}

\begin{exa}[General shear transformation]\label{exa:shear}
Consider a parallel 'surfaces' form the Example \ref{exa:parallel} and let $f:\mathbb{R}^n\to\mathbb{R}^n$ be a general shear transformation:
\begin{equation*}
f(x,y)=(x+By,y),\quad x\in\mathbb{R}^m,y\in\mathbb{R}^{n-m},
\end{equation*}
where $B\in\mathcal{M}(n,n-m)$. In matrix notation, $f$ is a linear transformation defined by a matrix $M_f=\left(\begin{array}{cc} I_m & B \\ 0 & I_{n-m} \end{array}\right)$. We will compute the $p$--modulus of $\Sigma=f(\Sigma_0)$. Clearly, $I_f=\det M_f=1$ and $|I^y_f|=(\det(B^{\top}B+I_{n-m}))^{\frac{1}{2}}$. Thus, applying Corollary \ref{cor:generalizedcondenser} to the family $\Sigma=f(\Sigma_0)$, we have
\begin{equation*}
f_{\Sigma}=H^m(V)^{-1}(\det(B^{\top}B+I_{n-m}))^{-\frac{1}{2}}
\end{equation*}
and
\begin{equation*} 
{\rm mod}_p(\Sigma)=H^{n-m}(U)H^m(V)^{1-p}(\det(B^{\top}B+I_{n-m}))^{-\frac{p}{2}},
\end{equation*}
since $l=H^m(V)\det^{\frac{q}{2}}(B^{\top}B+I_{n-m})$.   
\end{exa}

\section{Final remarks}

In this section we comment on possible weakest conditions which can be imposed on $f$ in Theorem \ref{thm:main}. In \cite{BMV}, i the case of curve family, the authors assume that $f$ is in Sobolev class $W^{1,p}$. Then $f$ is $ACL^p$ and, hence, for $H^{n-1}$--almost all $x\in D$, the derivative $\frac{\partial f}{\partial t}$ exists ($t$ is a parameter for curves in the considered family). In the analogous situation but for 'hypersurface' family, the authors of \cite{BMV} assume the parametrization of surfaces is $C^1$--smooth and they deform the surface family by a $W^{1,p}$--homeomorphism with finite distortion. This allows to deduce that the inverse map $f^{-1}$ is $W^{1,1}$ with finite distortion. In the formula for a $p$--modulus of this family, the gradient of submersion defining the hypersurfaces is used, while for a map $f$ the only information needed is the jacobian $J_f$. The authors rely on area and coarea formulea in the possible weakest forms (see \cite{MSZ}). 
 
We feel that the case of $C^1$--smoothness in not so restrictive, although, probably, may be slightly weakened.

\end{document}